\documentclass[12pt,a4paper,reqno]{amsart}

\usepackage[a4paper,inner=1.26in,outer=1.26in,top=1.4in,bottom=1in]{geometry}
\usepackage{amsmath,amssymb,amsfonts}
\usepackage{graphicx,hyperref}
\usepackage{xcolor}
\newcommand{\lightzero}{{\color{black!25}0}}
\numberwithin{equation}{section}

\newtheorem{theorem}{Theorem}[section]
\newtheorem{proposition}[theorem]{Proposition}
\newtheorem{lemma}[theorem]{Lemma}

\theoremstyle{definition}

\theoremstyle{remark}
\newtheorem*{remark}{Remark}

\newcommand{\R}{\mathbb R}

\title[The Ces\`aro Operator is in the Toeplitz Algebra]
      {The Ces\`aro Operator is in the Toeplitz Algebra}

\author{Yuanqi Sang}
\address{School of Mathematics, Southwestern University of Finance and Economics,
  Chengdu, 611130, China}
\email{sangyq@swufe.edu.cn}

\date{}
\subjclass[2020]{47B38, 47B35}
\keywords{Ces\`aro Operator, Toeplitz Algebra, Hardy space}

\begin{document}

\begin{abstract}
Let $\mathbf T$ denote the $C^*$-algebra generated by all bounded
Toeplitz operators on the Hardy space of the unit disk. Barr\'{\i}a and
Halmos [\emph{Trans. Amer. Math. Soc.} \textbf{273} (1982), no.~2,
621--630] asked whether the Ces\`aro operator belongs to
$\mathbf T$. We answer this question affirmatively.
\end{abstract}

\maketitle

\section{Introduction}

Let $\mathbb D$ and $\mathbb T$ denote the open unit disk and the unit circle in $\mathbb C$, respectively.
The Hardy space $H^{2}(\mathbb D)$
consists of all analytic functions $f$ on $\mathbb D$ such that
\begin{align*}
\sup_{0<r<1} \left( \frac{1}{2\pi}\int_{0}^{2\pi} |f(re^{it})|^2\,dt \right)^{1/2} <\infty.
\end{align*}
As usual, we write
$L^2(\mathbb T)=L^2(\mathbb T,\frac{d\theta}{2\pi})$, and identify $H^2(\mathbb D)$ with
$H^2(\mathbb T)$, the closed subspace of $L^2(\mathbb T)$ consisting
of functions whose negative Fourier coefficients vanish.

The \emph{Ces\`aro operator} $\mathcal C$ on \(H^2(\mathbb D)\) is defined by
\begin{align*}
(\mathcal Cf)(z)=\frac1z\int_{0}^{z}\frac{f(w)}{1-w}\,dw, \qquad f\in H^{2}(\mathbb D).
\end{align*}
With respect to the standard orthonormal basis
$\{z^n\}_{n=0}^{\infty}$, the Ces\`aro operator has the following
infinite matrix representation:
\begingroup
\renewcommand{\arraystretch}{1}
\setlength{\arraycolsep}{6pt}
\begin{align*}
\begin{pmatrix}
1 & \lightzero & \lightzero & \cdots \\
{\scriptstyle\frac{1}{2}} &
{\scriptstyle\frac{1}{2}} & \lightzero & \cdots \\
{\scriptstyle\frac{1}{3}} &
{\scriptstyle\frac{1}{3}} &
{\scriptstyle\frac{1}{3}} & \cdots \\
\vdots & \vdots & \vdots & \ddots
\end{pmatrix}.
\end{align*}
\endgroup
Indeed, if
$f(z)=\sum_{n=0}^{\infty}a_n z^n\in H^2(\mathbb D)$, then the $n$th
Taylor coefficient of \(\mathcal C f\) is
$
 \frac{1}{n+1}\sum_{j=0}^{n}a_j.
$
For a comprehensive treatment, see \cite{MashreghiRoss2026}.

Let $P$ denote the orthogonal projection from $L^{2}(\mathbb{T})$ onto
$H^{2}(\mathbb{T})$. Let $L^{\infty}(\mathbb{T})$ denote the space of
essentially bounded measurable functions on $\mathbb{T}$. 
Let $C(\mathbb{T})$ denote the space of
continuous functions on $\mathbb{T}$.
For
$\phi\in L^{\infty}(\mathbb{T})$, the \emph{Toeplitz operator}
$T_{\phi}$ is defined by
\begin{align*}
T_{\phi}h=P(\phi h), \qquad h\in H^2(\mathbb T).
\end{align*}
In particular, $S=T_{z}$ is the unilateral shift.

Barría and Halmos \cite{BarriaHalmos1982} investigated the relationship between
$\mathcal C$ and several natural algebras associated with Toeplitz operators.
For the discussion below, we use the following notation.
\begin{itemize}
\item $\mathbf T_0$ denotes the algebra consisting of all finite sums of finite
products of Toeplitz operators, without taking the norm closure.
\item $\mathbf T_c$ denotes the $C^*$-algebra generated by
$\{T_{\phi}:\phi\in C(\mathbb T)\}$.
\item $\mathbf T$ denotes the $C^*$-algebra generated by
$\{T_{\phi}:\phi\in L^{\infty}(\mathbb T)\}$.
\item $\mathbf Q$ denotes the commutator ideal of $\mathbf T.$
\item $\mathbf E$ denotes the essential commutant of $S$.
\end{itemize}
The following inclusions hold:
\begin{align*}
\mathbf T_0\subset \mathbf T\subset \mathbf E,
\qquad
\mathbf T_c\subset \mathbf T\subset \mathbf E.
\end{align*}
They proved that $S^*\mathcal C S-\mathcal C$ is a Hilbert--Schmidt
operator and consequently that $\mathcal C\in\mathbf E$. They also
showed that $\mathcal C$ is a noncompact asymptotic Toeplitz operator
with zero symbol. These results suggest that
$\mathcal C$ may belong to $\mathbf T$.
For every $T\in\mathbf T_0$, the operator $S^{*}TS-T$ has finite rank.
Note that $S^*\mathcal C S-\mathcal C$ is lower triangular,
and all of its diagonal entries, given by
$-\frac{1}{(i+1)(i+2)}$, are nonzero. Consequently,
$S^*\mathcal C S-\mathcal C$ has infinite rank, and therefore
$\mathcal C\notin\mathbf T_0$. Since $\mathcal C$ is noncompact and has
zero asymptotic symbol, Coburn's description of $\mathbf T_c$
\cite{Coburn1967} yields $\mathcal C\notin\mathbf T_c$.
These results led Barría and Halmos to pose the following question:
\begin{equation*}
\text{\emph{Is $\mathcal C$ in $\mathbf T$?}}
\tag{$\ast$}\label{ques:cesaro-toeplitz}
\end{equation*}
The aim of this paper is to answer this question.

Although Douglas \cite[Theorem~7.11]{Douglas1998} proved that
\begin{align*}
\mathbf T=\{T_{f}+A:f\in L^{\infty}(\mathbb T),\ A\in\mathbf Q\},
\end{align*}
this description is not, in general, an effective test for membership
in $\mathbf T$.
For example, Barría and Halmos
\cite[Question~(19)]{BarriaHalmos1982} asked which projections belong
to $\mathbf T$. 
Limited progress has been made on this problem; see
\cite{DanDingGuoSang2020}.

Engli\v{s} \cite{Englis1995} introduced the \(C^*\)-algebra
$
\mathcal A_1=\{T_\varphi+H:\varphi\in L^\infty(\mathbb T),\,H\in\mathcal J\},
$
where \(\mathcal J\) denotes the class
of all bounded operators \(H\) for which both
\(\|Hk_\lambda\|\) and \(\|H^*k_\lambda\|\) tend to zero radially.
He showed that
$\mathcal{C}\in \mathcal J\subset\mathcal A_1$.
Moreover, $\mathbf T\subset \mathcal A_1$
and $\mathbf T \cap \mathcal J=\mathbf Q$.
Mart{\'{\i}}nez-Avenda{\~n}o \cite{MartinezAvendano2002} showed that
$\mathcal C$ is essentially Hankel, in the sense that
\(S^{*}\mathcal C-\mathcal C S\) is compact.
Nevertheless, $\mathcal C$ is not a compact perturbation of any bounded
Hankel operator.
Bellavita and Stylogiannis \cite{BellavitaStylogiannis2024} showed that
$V_g\notin\mathbf E(\mathbf T)$ whenever
$g\in\mathrm{BMOA}\setminus\mathrm{VMOA}$, where
$
(V_gf)(z)=\int_0^z f(w)g'(w)\,dw,
$
and $\mathbf E(\mathbf T)$ denotes the essential commutant
of $\mathbf T$. Taking $g=-\log(1-z)$, for which
$V_g=S\mathcal C$, we conclude that
$\mathcal C\notin\mathbf E(\mathbf T)$. Davidson
\cite{Davidson1977} characterized
$\mathbf E(\mathbf T)$ as the $C^*$-algebra generated by
the Toeplitz operators $T_\phi$ with quasicontinuous symbols $\phi$.
A 2026 monograph on the Cesàro operator \cite{MashreghiRoss2026} states that this problem remains open.

Brown, Halmos, and Shields \cite{BrownHalmosShields1965} initially
studied the following three operators:
\[
(\mathcal C_0f)(n)
=
\frac{1}{n+1}\sum_{j=0}^{n}f(j),
\qquad n=0,1,2,\ldots,
\]
\[
(\mathcal C_1f)(x)
=
\frac{1}{x}\int_0^x f(y)\,dy,
\qquad 0<x<1,
\]
and
\begin{align}\label{c-infty}
(\mathcal C_\infty f)(x)
=
\frac{1}{x}\int_0^x f(y)\,dy,
\qquad 0<x<\infty,
\end{align}
acting on $\ell^2$, $L^2(0,1)$, and $L^2(0,\infty)$, respectively.
$\mathcal C_0$ is unitarily equivalent to $\mathcal C.$
They determined the norms and spectra of these operators and proved that
$I-\mathcal C_1^*$ and $I-\mathcal C_\infty^*$ are unitarily equivalent
to the unilateral and bilateral shifts of multiplicity one, respectively.
The latter equivalence is particularly relevant to our analysis of
$\mathcal C_\infty^*$ below.
These results laid the foundation for much of the subsequent work.

Kriete and Trutt later proved that the Cesàro operator $\mathcal C$ is cyclic
and subnormal by constructing a reproducing kernel Hilbert space model
\cite{KrieteTrutt1971}.
Further investigations have employed semigroups of composition operators,
multiplication-operator models, and the Laplace, Mellin, and Fourier
transforms to study its cyclicity and invariant subspaces; see, for example,
\cite{GallardoGutierrezPartington2024,
GallardoGutierrezPartingtonRoss2025Hardy,
GallardoGutierrezPartingtonRoss2026,BelliGulRossSiskakis2026}.
These tools enable us to solve Problem~\eqref{ques:cesaro-toeplitz}. 

Throughout the paper,  $\mathbb R$ denotes the set of all real numbers, for any measurable subset $E$ of either
$\mathbb R$ or $\mathbb T$, we denote by $\chi_E$ its characteristic
function, defined by
\[
\chi_E(t)
=
\begin{cases}
1, & t\in E,\\
0, & t\notin E.
\end{cases}
\]
We now state the main
results of this paper.
\begin{theorem}\label{thm:main}
The Cesàro operator $\mathcal C$ admits the following factorization:
\begin{align*}
\mathcal C=(I+S^{*})g(T_{\mathord{\scalebox{1}{$\chi$}}_{\mathbb T_-}})
\end{align*}
where 
\[ 
g(s)=
\left\{
\begin{array}{c@{\quad}l}
\displaystyle
\left[1+\frac{2i}{\pi}
\operatorname{arctanh}(2s-1)\right]^{-1},
& 0<s<1,\\[1.2ex]
0,
& s=0\ \text{or}\ s=1,
\end{array}
\right.\] and $\mathbb T_{-}=\{e^{i\theta}:\pi<\theta<2\pi \}.$
Hence,
$\mathcal C\in \mathbf Q\subset\mathbf T$.
\end{theorem}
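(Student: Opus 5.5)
The plan is to diagonalize $T:=T_{\chi_{\mathbb T_-}}$ explicitly by transferring to the half-line, where both $T$ and (a relative of) $\mathcal C$ become multiplication operators under the Mellin transform. Since $0\le \chi_{\mathbb T_-}\le 1$, $T$ is a positive contraction. By Hartman--Wintner, $\sigma(T)=[0,1]$, and the spectrum is absolutely continuous (no eigenvalues).

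\emph{Step 1: transfer to the half-line.} Use the Cayley transform $z=\frac{w-i}{w+i}$ to pass unitarily from $H^2(\mathbb T)$ to $H^2$ of the upper half-plane. Then apply the Fourier--Plancherel transform to identify this space with $L^2(0,\infty)$. The arc $\mathbb T_-$ corresponds to a half-line in $\mathbb R$, so $T$ becomes the compression of multiplication by $\chi_{(-\infty,0)}$ (or $\chi_{(0,\infty)}$, depending on orientation).

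\emph{Step 2: spectral representation of $T$.} On the Fourier side, this compression is an integral operator on $L^2(0,\infty)$ whose kernel is homogeneous of degree $-1$, of the form $\frac{1}{2\pi i(x+y)}$ plus $\tfrac12 I$. It is therefore diagonalized by the Mellin transform $f\mapsto\int_0^\infty f(x)x^{-1/2-i\xi}\,dx$. Computing its symbol via the classical integral $\int_0^\infty \frac{t^{-1/2-i\xi}}{1+t}\,dt=\frac{\pi}{\cosh \pi\xi}$, I expect the symbol to be $s(\xi)=\tfrac12(1-\tanh(\pi\xi/ 2))$ or a similar expression. The exact constant is to be determined; it should make $\operatorname{arctanh}(2s-1)$ linear in $\xi$. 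Consequently $g(T)$ is the Mellin multiplier $\xi\mapsto (1+ic\xi)^{-1}$. This is, up to normalization, exactly the Mellin symbol of $\mathcal C_\infty$ from \eqref{c-infty} (or of its adjoint): indeed $\mathcal C_\infty x^{-1/2+i\xi}=(\tfrac12+i\xi)^{-1}x^{-1/2+i\xi}$. So $g(T)$ is unitarily equivalent to $\tfrac12\mathcal C_\infty$ or $\tfrac12\mathcal C_\infty^*$ under the composite unitary $U$ of Steps 1--2.

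\emph{Step 3: the main obstacle.} It remains to identify $U^{-1}\mathcal C_\infty U$ (or its adjoint) on $H^2(\mathbb D)$ and to show it equals $(I+S^*)^{-1}\mathcal C$, i.e. $(I+S^*)\,g(T)=\mathcal C$. Here the exact normalizations (the constants $c$, orientation, and adjoint versus operator) must come out right. I would verify the identity on a total set. The natural choice is the images of the Mellin eigenfunctions, which in the disc are functions of the form $(1-z)^{\alpha}(1+z)^{\beta}$-type with $\operatorname{Re}$ exponents $-1/2$. Alternatively one can use reproducing kernels $k_\lambda$. On such functions the integral formula $(\mathcal Cf)(z)=\frac1z\int_0^z \frac{f(w)}{1-w}\,dw$ can be evaluated in closed form. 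One then checks that applying $I+S^*$, i.e. $f\mapsto f+\frac{f-f(0)}{z}$, to $g(T)f$ reproduces $\mathcal Cf$. If the eigenfunction computation is messy, a cleaner check is via adjoints. $\mathcal C^*$ has a simple action ($\mathcal C^* z^n$ has coefficients $\frac{1}{k+1}$ for $k\ge n$), so it suffices to show $g(T)(I+S)=\mathcal C^*$ on the basis $z^n$. This reduces to the single generating function identity $g(T)\big((1+z)z^n\big)=\mathcal C^*z^n$, which I would compute in the Mellin picture.

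\emph{Step 4: membership in $\mathbf Q$.} $g$ is continuous on $[0,1]$, because $\operatorname{arctanh}(2s-1)\to\pm\infty$ at the endpoints, and $g(0)=g(1)=0$. Hence $g$ is a uniform limit of polynomials of the form $s(1-s)p(s)$, and $g(T)$ is a norm limit of $T(I-T)p(T)$. Now $T(I-T)=T_{\chi}-T_\chi T_\chi=T_{\chi^2}-T_\chi T_\chi$ is a semicommutator, and therefore lies in $\mathbf Q$. Since $\mathbf Q$ is a closed ideal of $\mathbf T$, we get $g(T)\in\mathbf Q$. Finally $I+S^*\in\mathbf T$, so $\mathcal C=(I+S^*)g(T)\in\mathbf Q\subset\mathbf T$.
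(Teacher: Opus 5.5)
Your proposal does not prove the factorization $\mathcal C=(I+S^*)g(T_{\chi_{\mathbb T_-}})$, which is the whole content of the theorem. Step 3 only says it ``remains'' to be checked on some total set, and that check is never carried out.

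The missing idea is to compute $\mathcal C$ itself in the half-line model, rather than testing a guessed identity. Under $U$, the substitution $z=\omega(\eta)$ turns $\mathcal C$ into $F\mapsto\frac{1}{\zeta-i}\int_i^\zeta F(\eta)\,d\eta$ on $H^2(\mathbb C_+)$. Writing $F=\mathcal F^{-1}u$ and using Fubini gives
\[
(\mathcal U\mathcal C\mathcal U^{-1}u)(t)=\int_t^\infty e^{-(x-t)}\,\frac{u(x)}{x}\,dx .
\]
Since $e^{-(x-t)}=1-\int_t^x e^{-(s-t)}\,ds$, this equals $(I-W_{k_R})\mathcal C_\infty^*u$, where $(W_{k_R}v)(t)=\int_t^\infty e^{-(x-t)}v(x)\,dx$. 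The symbol of $W_{k_R}$ is $\frac{1}{1+ix}$, which pulls back to $\frac12(1-\bar\xi)$ on $\mathbb T$. Hence $I-W_{k_R}=\frac12\,\mathcal U(I+S^*)\mathcal U^{-1}$, and this $\frac12$ cancels the $2$ in $\mathcal C_\infty^*=2\,\mathcal U g(T_{\chi_{\mathbb T_-}})\mathcal U^{-1}$. This is exactly where the factor $I+S^*$ comes from.

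Your adjoint fallback is also misstated. Since $g$ is complex-valued, $g(T)^*=\bar g(T)$. The identity to check would therefore be $\bar g(T)(I+S)=\mathcal C^*$, not $g(T)(I+S)=\mathcal C^*$.

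Step 2 is wrong as written. After the Fourier transform, $T_{\chi_{\mathbb T_-}}$ becomes $u\mapsto\frac12u+\frac{i}{2\pi}\operatorname{p.v.}\int_0^\infty\frac{u(y)}{y-t}\,dy$, which has a singular \emph{difference} kernel. Your operator $\frac12I+\frac{1}{2\pi i(x+y)}$ has a Hankel-type kernel and is not even self-adjoint, whereas $T_{\chi_{\mathbb T_-}}$ is. The integral $\int_0^\infty\frac{t^{-1/2-i\xi}}{1+t}\,dt$ you cite is not the relevant one. The correct computation is $\operatorname{p.v.}\int_0^\infty\frac{t^{-1/2-i\xi}}{1-t}\,dt=\pi\cot\bigl(\pi(\tfrac12-i\xi)\bigr)=i\pi\tanh(\pi\xi)$. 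This gives the Mellin symbol $\frac12(1-\tanh\pi\xi)$, and $g$ of that is $(1-2i\xi)^{-1}=\frac12(\frac12-i\xi)^{-1}$, the Mellin symbol of $\frac12\mathcal C_\infty^*$.

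Your Step 4 is correct and genuinely different from the paper. You approximate $g$ uniformly by $s(1-s)p(s)$ and observe that $T(I-T)=T_{\chi^2}-T_\chi T_\chi$ is a semicommutator, hence in $\mathbf Q$. This is more self-contained than the paper's appeal to the Barr\'{\i}a--Halmos asymptotic symbol ($\sigma(\mathcal C)=0$ and $\ker\sigma|_{\mathbf T}=\mathbf Q$). However, it only applies once the factorization has been proved.
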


The paper is organized as follows. We first recall the unitary equivalence
between Toeplitz operators and Wiener--Hopf operators. Using the unitary
operator that implements this equivalence, we compute the representation of
the Ces{\`a}ro operator on $L^2(\mathbb R_+)$. We show that the resulting
operator factors as the product of a Wiener--Hopf operator and
$\mathcal C_\infty^*$. Finally, using the Mellin transform, we prove that
$\mathcal C_\infty^*$ is a continuous function of a certain Wiener--Hopf
operator, thereby completing the proof of the main theorem.

\section{Toeplitz and Wiener--Hopf Operators}
In this section, we recall the classical unitary equivalence between 
Toeplitz operators and Wiener--Hopf operators; 
see, for example, \cite{Devinatz1967}, \cite[4.2]{Nikolski2020} and \cite[Part A, Chapter 6; Part B, 4.6]{Nikolski2002}.
We mainly follow the notation used in Nikolski’s books \cite{Nikolski2002,Nikolski2020}.

Throughout, we write
$\mathbb C_+=\{z\in\mathbb C:\operatorname{Im}z>0\},\mathbb R_+=(0,+\infty)$ and 
$\mathbb R_-=(-\infty,0).$
Let $\omega:\mathbb C_+\longrightarrow\mathbb D$ be the conformal map given by
\begin{equation*}
\omega(\eta)=\frac{\eta-i}{\eta+i},\qquad \eta \in \mathbb C_+
\end{equation*}
and
\begin{equation*}
\omega^{-1}(z)=i\frac{1+z}{1-z},\qquad z\in \mathbb D.
\end{equation*}
The induced boundary map $\omega:\mathbb R \to \mathbb T\setminus\{1\}$ is a bijection.
Define
\begin{equation*}
 (Uf)(x)=\frac{1}{\sqrt\pi\,(x+i)}
 f\!\left(\frac{x-i}{x+i}\right),
 \qquad x\in\mathbb R.
\end{equation*}
Then \(U:L^2(\mathbb T)\to L^2(\mathbb R)\) is unitary. 
We define $H^2(\mathbb R)$ to be the closed subspace of $L^2(\mathbb R)$ given by
$UH^2(\mathbb T).$
We identify
\begin{align*}
H^2(\mathbb C_+) = \left\{ f\in\operatorname{Hol}(\mathbb C_+): \sup_{y>0} \int_{\mathbb R}|f(x+iy)|^2\,dx<\infty \right\}
\end{align*}
with $H^2(\mathbb R)$ via boundary values.

Let $\mathcal F:L^2(\mathbb R)\to L^2(\mathbb R)$ be the unitary
Fourier transform given by
\begin{equation*}
 (\mathcal F f)(t)=\frac{1}{\sqrt{2\pi}}\int_\R f(x)e^{-ixt}dx,
 \qquad
 (\mathcal F^{-1}g)(x)=\frac{1}{\sqrt{2\pi}}
 \int_\R g(t)e^{ixt}dt.
\end{equation*}
We identify $L^2(\mathbb R_+)$ with $\chi_{\mathbb R_+}L^2(\mathbb R)$. The Paley--Wiener theorem \cite{PaleyWiener1934} implies 
\[\mathcal F H^2(\mathbb C_+)=L^2(\mathbb R_+).\]
Let
\begin{equation*}
 \mathcal U=\mathcal F U:
 H^2(\mathbb D)\longrightarrow L^2(\mathbb R_+).
\end{equation*}

Let \(\mathcal S(\mathbb R)\) be the Schwartz space and let
\(\mathcal S'(\mathbb R)\) be the space of tempered distributions. 
The set of all pseudo-measures is denoted by
\begin{equation*}
 \mathcal{PM}(\mathbb R)
 =\{k\in\mathcal S'(\mathbb R):\mathcal F k\in L^\infty(\mathbb R)\}.
\end{equation*}
If \(u\in\mathcal S(\R)\) and \(k\in\mathcal S'(\R)\), their convolution
is defined by
\begin{equation*}
 (k*u)(x)
 =\bigl\langle k,\tau_xu_*\bigr\rangle_{\mathcal S',\mathcal S},
 \qquad
 (\tau_xv)(y)=v(y-x),
 \qquad
 u_*(t)=u(-t).
\end{equation*}
When \(k\) is a locally integrable function, this is the usual
convolution
\[
 (k*u)(x)=\int_\R k(x-y)u(y)\,dy.
\]
The convolution map extends to a bounded operator on \(L^2(\R)\) if
and only if \(k\in\mathcal{PM}(\R)\).
Let $P^+g=\chi_{\mathbb R_+}g$
be the orthogonal projection from \(L^2(\R)\) onto
\(L^2(\mathbb R_+)\), and put
\begin{align*}
 \mathcal S(\mathbb R_+)
 =\{f\in\mathcal S(\R):\operatorname{supp}f\subset\mathbb R_+\}.
\end{align*}

For \(k\in\mathcal{PM}(\R)\), the \emph{Wiener--Hopf operator with
kernel \(k\)} is the bounded operator
\(W_k:L^2(\mathbb R_+)\to L^2(\mathbb R_+)\) initially defined by
\begin{equation*}
 W_kf=P^+(k*f),
 \qquad f\in\mathcal S(\mathbb R_+).
\end{equation*}
The function
$\Phi=\sqrt{2\pi} \mathcal F^{-1}k \in L^\infty(\R)$
is called the \emph{symbol} of \(W_k\).

The following unitary equivalence between Toeplitz operators and Wiener--Hopf 
operators will be useful in this paper; see \cite[Corollary 4.2.3]{Nikolski2020}.
\begin{proposition}\label{pr:wiener}
Let $k \in \mathcal{PM}(\mathbb{R}).$ A Wiener--Hopf operator
\begin{align*}
W_k: L^2\left(\mathbb{R}_{+}\right) \rightarrow L^2\left(\mathbb{R}_{+}\right)
\ \text{with symbol} \ \ \Phi=\sqrt{2\pi}\mathcal{F}^{-1} k
\end{align*}
is unitarily equivalent to a Toeplitz operator 
$T_{\varphi}: H^2(\mathbb{D}) \rightarrow H^2(\mathbb{D})$,
\begin{align*}
W_k=\mathcal{U} T_{\varphi} \mathcal{U}^{-1}, \quad 
\text{where} \quad \varphi=\Phi \circ \omega^{-1}.
\end{align*}  
\end{proposition}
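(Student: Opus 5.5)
The plan is to conjugate $T_\varphi=P M_\varphi|_{H^2(\mathbb T)}$ by the two unitaries $U$ and $\mathcal F$ separately. At each stage I will track what happens to the multiplication operator and to the Riesz projection $P$. Here $M_\varphi$ denotes multiplication by $\varphi$.

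\emph{Step 1 (the map $U$).} First I check that $U$ is unitary. The boundary map $x\mapsto e^{i\theta}=\frac{x-i}{x+i}$ satisfies $\frac{d\theta}{2\pi}=\frac{dx}{\pi(1+x^2)}$, and $|x+i|^2=1+x^2$, so $U$ is an isometry. It is onto because the inverse formula $f(z)=\sqrt\pi\,(\omega^{-1}(z)+i)\,g(\omega^{-1}(z))$ is explicit. Next comes the intertwining identity
\[
U(\varphi f)(x)=\varphi(\omega(x))\,(Uf)(x),
\]
which says $UM_\varphi U^{-1}=M_{\varphi\circ\omega}=M_\Phi$, since $\varphi\circ\omega=\Phi$. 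Because $H^2(\mathbb R)=UH^2(\mathbb T)$ by definition, $UPU^{-1}$ is the orthogonal projection $P_{H^2(\mathbb R)}$ onto $H^2(\mathbb R)$. Together these give $UT_\varphi U^{-1}=P_{H^2(\mathbb R)}M_\Phi|_{H^2(\mathbb R)}$.

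\emph{Step 2 (the map $\mathcal F$).} By the Paley--Wiener theorem, $\mathcal F$ maps $H^2(\mathbb R)$ onto $L^2(\mathbb R_+)$. Hence $\mathcal F P_{H^2(\mathbb R)}\mathcal F^{-1}=P^+$. It remains to show that $\mathcal F M_\Phi\mathcal F^{-1}$ is convolution by $k$. Formally, the convolution theorem with our normalization gives
\[
\mathcal F(\Phi g)=(2\pi)^{-1/2}(\mathcal F\Phi)*(\mathcal F g),
\qquad
\mathcal F\Phi=\sqrt{2\pi}\,\mathcal F\mathcal F^{-1}k=\sqrt{2\pi}\,k,
\]
so $\mathcal F M_\Phi\mathcal F^{-1}u=k*u$.

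I will make this rigorous on $u\in\mathcal S(\mathbb R)$ using the distributional definition $(k*u)(x)=\langle k,\tau_xu_*\rangle$. The argument pairs $k=(2\pi)^{-1/2}\mathcal F\Phi$ against $\tau_xu_*$, moves $\mathcal F$ onto the test function, and recognizes the result as $\mathcal F^{-1}(\Phi\,\mathcal F^{-1}u)$ evaluated appropriately. Both sides are bounded on $L^2$, since $\|\Phi\|_\infty<\infty$, and $\mathcal S(\mathbb R_+)$ is dense in $L^2(\mathbb R_+)$, so the identity extends. Combining the two steps gives $\mathcal U T_\varphi\mathcal U^{-1}f=P^+(k*f)=W_kf$ on $\mathcal S(\mathbb R_+)$, hence everywhere.

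The main obstacle is Step 2. I need the precise normalization and reflection conventions in the pseudo-measure convolution theorem, including the $u_*$ in the definition and the sign in $\mathcal F$ versus $\mathcal F^{-1}$, so that the kernel comes out exactly $k$ rather than a reflected or conjugated version. I will verify this first on a test case $k\in L^1$, where everything is classical, and then pass to general $k\in\mathcal{PM}$ by the duality argument above. The remaining identities are routine changes of variables.
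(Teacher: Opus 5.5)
Your proof is correct. The paper gives no argument of its own for this proposition; it only refers to Corollary 4.2.3 of Nikolski's book. Your two-stage conjugation is the standard argument behind that citation, written out in full:
\begin{itemize}
\item first $U$, which turns $M_\varphi$ into $M_{\varphi\circ\omega}=M_\Phi$ and $P$ into the projection onto $H^2(\mathbb R)=UH^2(\mathbb T)$;
\item then $\mathcal F$, which turns that projection into $P^+$ by Paley--Wiener, and $M_\Phi$ into convolution by $k$.
\end{itemize}

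What your version adds is an explicit check that this paper's normalizations fit together. With $\mathcal F$ as defined, $\mathcal F^{-1}(k*u)=\sqrt{2\pi}\,(\mathcal F^{-1}k)(\mathcal F^{-1}u)=\Phi\,\mathcal F^{-1}u$. So the kernel comes out exactly as $k$, with no reflection or conjugation. This matters because the paper later relies on the precise form $\Phi_R(x)=1/(1+ix)$ for $k_R(y)=e^{y}\chi_{\mathbb R_-}(y)$, and on $\chi_{\mathbb R_+}\circ\omega^{-1}=\chi_{\mathbb T_-}$.

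There is one small slip in your duality sketch. Since $\mathcal F(\tau_xu_*)(s)=e^{-ixs}(\mathcal F^{-1}u)(s)$, the pairing gives $(k*u)(x)=\frac{1}{\sqrt{2\pi}}\int_{\mathbb R}\Phi(s)(\mathcal F^{-1}u)(s)e^{-ixs}\,ds=\mathcal F(\Phi\,\mathcal F^{-1}u)(x)$. That is, it is $\mathcal F$ rather than $\mathcal F^{-1}$, or equivalently $\mathcal F^{-1}(\Phi\,\mathcal F^{-1}u)(-x)$. This agrees with your displayed identity $\mathcal F M_\Phi\mathcal F^{-1}u=k*u$. The integral converges absolutely because $\Phi\in L^\infty$ and $\mathcal F^{-1}u\in\mathcal S(\mathbb R)$, so the density extension goes through as you describe.
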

\section[The Ces\`aro Operator on L2(R+)]
{The Ces\`aro Operator on
\texorpdfstring{$L^{2}(\mathbb R_+)$}{L2(R+)}}
In this section, we use the unitary operator $\mathcal U$ to 
derive the representation of the Ces\`aro operator $\mathcal C$ on $L^2(\mathbb R_+)$.

\begin{lemma}\label{le:upperhalfplane}
If \(F\in H^{2}(\mathbb C_{+})\), then
\begin{align*}
(U\mathcal C U^{-1}F)(\zeta)=\frac{1}{\zeta-i}
  \int_i^\zeta F(\eta) \,d\eta,
  \qquad
 \zeta\in\mathbb C_+\setminus\{i\}.
\end{align*}
\end{lemma}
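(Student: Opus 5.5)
This is a direct change of variables. Write $G=U^{-1}F\in H^2(\mathbb D)$ and invert the definition of $U$. With $z=\omega(\zeta)=\frac{\zeta-i}{\zeta+i}$, the definition gives
\[
F(\zeta)=\frac{1}{\sqrt\pi\,(\zeta+i)}\,G(\omega(\zeta)),
\qquad\text{equivalently}\qquad
G(z)=\sqrt\pi\,(\omega^{-1}(z)+i)\,F(\omega^{-1}(z)).
\]
These formulas extend holomorphically from the boundary into $\mathbb C_+$ and $\mathbb D$, since $U$ maps $H^2(\mathbb T)$ onto $H^2(\mathbb C_+)$ through boundary values. Both sides of the identity are holomorphic in $\zeta$, so it suffices to prove it pointwise in $\mathbb C_+$ and then pass to boundary values.

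The computation then runs as follows.

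First, I record the elementary relations
\[
1-\omega(\eta)=\frac{2i}{\eta+i},
\qquad
\omega'(\eta)=\frac{2i}{(\eta+i)^2},
\qquad
\omega(i)=0.
\]

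Next, in $(\mathcal C G)(z)=\frac1z\int_0^z\frac{G(w)}{1-w}\,dw$ I substitute $w=\omega(\eta)$. The path of integration runs from $\eta=i$ to $\eta=\zeta$ inside $\mathbb C_+$, and it can be any path because the integrand is holomorphic and $\mathbb C_+$ is simply connected. The integrand becomes
\[
\frac{G(\omega(\eta))}{1-\omega(\eta)}\,\omega'(\eta)
=\sqrt\pi\,(\eta+i)F(\eta)\cdot\frac{\eta+i}{2i}\cdot\frac{2i}{(\eta+i)^2}
=\sqrt\pi\,F(\eta).
\]
Hence
\[
\int_0^z\frac{G(w)}{1-w}\,dw=\sqrt\pi\int_i^\zeta F(\eta)\,d\eta .
\]

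Then I divide by $z=\frac{\zeta-i}{\zeta+i}$ and apply $U$, which multiplies by $\frac{1}{\sqrt\pi(\zeta+i)}$. This gives
\[
(U\mathcal C G)(\zeta)=\frac{1}{\sqrt\pi(\zeta+i)}\cdot\frac{\zeta+i}{\zeta-i}\cdot\sqrt\pi\int_i^\zeta F(\eta)\,d\eta
=\frac{1}{\zeta-i}\int_i^\zeta F(\eta)\,d\eta,
\]
which is the claimed formula.

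The only point needing care is justifying that $U$ acts on $H^2$ by the same formula at interior points of $\mathbb C_+$, not just on the boundary. The map $G\mapsto \frac{1}{\sqrt\pi(\zeta+i)}G(\omega(\zeta))$ is holomorphic in $\zeta$, and its boundary values coincide with $UG$. Since an $H^2(\mathbb C_+)$ function is determined by its boundary values (Poisson or Cauchy representation), the two agree in $\mathbb C_+$. The singularity at $\zeta=i$ is removable, which is why that point is excluded from the statement. Everything else is routine algebra.
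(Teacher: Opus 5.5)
Your proof is correct and is essentially the paper's argument: the same substitution $w=\omega(\eta)$ using $1-\omega(\eta)=\frac{2i}{\eta+i}$ and $\omega'(\eta)=\frac{2i}{(\eta+i)^2}$, followed by dividing by $\omega(\zeta)$ and multiplying by $\frac{1}{\sqrt\pi(\zeta+i)}$. Your extra remarks on path independence and on why $U$ acts by the same formula at interior points of $\mathbb C_+$ only make explicit what the paper takes for granted.
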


\begin{proof}
Let \(f=U^{-1}F\in H^{2}(\mathbb D)\), so that \(F=Uf\). Thus,
\begin{align}\label{eq:Ff}
F(\eta)=(Uf)(\eta)=\frac{f(\omega(\eta))}{\sqrt{\pi}(\eta+i)},
\qquad \eta\in\mathbb C_+.
\end{align}
For \(\zeta\in\mathbb C_+\setminus\{i\}\), we have
\begin{equation}\label{eq:JCf}
\begin{aligned}
 \bigl(U \mathcal C f\bigr)(\zeta)
 &=\frac{1}{\sqrt{\pi}(\zeta+i)}
   \bigl(\mathcal C f\bigr)\bigl(\omega(\zeta)\bigr)\\
 &=\frac{1}{\sqrt{\pi}(\zeta+i)}
   \frac{1}{\omega(\zeta)}
   \int_0^{\omega(\zeta)}
        \frac{f(z)}{1-z}\,dz\\
 &=\frac{1}{\sqrt{\pi}(\zeta-i)}
  \int_0^{\omega(\zeta)}
       \frac{f(z)}{1-z}\,dz,
 \qquad \text{since}\quad
 \omega(\zeta)=\frac{\zeta-i}{\zeta+i}.
\end{aligned}
\end{equation}
We now make the change of variables
$z=\omega(\eta)=\frac{\eta-i}{\eta+i}.$
Since
\[
\omega'(\eta)
 =\frac{2i}{(\eta+i)^2}
 \qquad\text{and}\qquad
1-\frac{\eta-i}{\eta+i}
   =\frac{2i}{\eta+i},
\]
it follows that
\begin{align*}
 \int_0^{\omega(\zeta)}
       \frac{f(z)}{1-z}\,dz
 &=\int_i^\zeta
   \frac{f(\omega(\eta))}
        {1-\omega(\eta)}
   \omega'(\eta)\,d\eta\\
 &=\int_i^\zeta
   \frac{f(\omega(\eta))}{\eta+i}\,d\eta\\
 &=\int_i^\zeta
   \frac{\sqrt{\pi}(\eta+i)F(\eta)}
        {\eta+i}\,d\eta
        \qquad \text{by \eqref{eq:Ff}}\\
 &=\sqrt{\pi}\int_i^\zeta F(\eta)\,d\eta.
\end{align*}
Substituting this identity into \eqref{eq:JCf} and recalling that
\(f=U^{-1}F\), we obtain
\[
(U\mathcal C U^{-1}F)(\zeta)
=\frac{1}{\zeta-i}\int_i^\zeta F(\eta)\,d\eta,
\]
as desired.
\end{proof}
We denote by $C_c^\infty(\mathbb R_+)$ the space of all infinitely
differentiable functions with compact support in $\mathbb R_+$. Recall that
$C_c^\infty(\mathbb R_+)\subsetneq \mathcal S(\mathbb R_+),$
and that $C_c^\infty(\mathbb R_+)$ is dense in $L^2(\mathbb R_+)$.
\begin{lemma}
Let \(k_{R}(y)=e^{y}\chi_{\mathbb R_{-}}(y)\). Then
\begin{align}\label{fenjie}
\mathcal U \mathcal C \mathcal U^{-1}=(I-W_{k_R})\mathcal C^*_\infty.
\end{align}
\end{lemma}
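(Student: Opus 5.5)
The plan is to verify the identity \eqref{fenjie} directly on a dense set, working on the Fourier side. Fix $g\in C_c^\infty(\mathbb R_+)$ and put $F=\mathcal U^{-1}$-image in the upper half-plane, i.e.\ $F=\mathcal F^{-1}g\in H^2(\mathbb C_+)$. By Paley--Wiener,
\[
F(\eta)=\frac{1}{\sqrt{2\pi}}\int_0^\infty g(t)e^{i\eta t}\,dt,\qquad \eta\in\mathbb C_+ .
\]
By Lemma~\ref{le:upperhalfplane}, $H:=U\mathcal C U^{-1}F$ is the analytic function on $\mathbb C_+$ determined by two conditions: $(\zeta-i)H(\zeta)=\int_i^\zeta F(\eta)\,d\eta$, and $H$ is analytic at $\zeta=i$. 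It therefore suffices to do two things. First, produce an explicit $h\in L^2(\mathbb R_+)$ whose Paley--Wiener transform $\widetilde H(\zeta)=\frac{1}{\sqrt{2\pi}}\int_0^\infty h(t)e^{i\zeta t}\,dt$ satisfies
\begin{equation*}
\frac{d}{d\zeta}\Bigl[(\zeta-i)\widetilde H(\zeta)\Bigr]=F(\zeta).
\end{equation*}
This forces $(\zeta-i)\widetilde H(\zeta)=\int_i^\zeta F$, because the left side vanishes at $\zeta=i$ and $\widetilde H$ is holomorphic there. Second, check that $h=(I-W_{k_R})\mathcal C_\infty^* g$. Boundedness of both sides of \eqref{fenjie} then extends the identity from $C_c^\infty(\mathbb R_+)$ to all of $L^2(\mathbb R_+)$.

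To guess $h$, I translate the differential relation $\widetilde H+(\zeta-i)\widetilde H'=F$ into an ODE for $h$. Differentiation in $\zeta$ corresponds to $h(t)\mapsto it\,h(t)$. Multiplication by $\zeta$ corresponds to $\phi\mapsto i\phi'$, via $\zeta e^{i\zeta t}=-i\partial_t e^{i\zeta t}$ and an integration by parts, whose boundary term at $t=0$ vanishes for $\phi=th$. Hence $\zeta\widetilde H'$ corresponds to $i(ith)'=-(th)'$, and $-i\widetilde H'$ corresponds to $th$. The relation becomes
\begin{equation*}
h-(th)'+th=g,\qquad\text{that is}\qquad h'-h=-\frac{g(t)}{t}.
\end{equation*}
The general solution adds $ce^{t}$, so the only $L^2$ solution is
\[
h(t)=\int_t^\infty e^{t-s}\,\frac{g(s)}{s}\,ds .
\]
For $g\in C_c^\infty(\mathbb R_+)$ this $h$ is smooth on $[0,\infty)$, decays exponentially, and satisfies $th\to0$ at $0$. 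This justifies the integration by parts above and shows $\widetilde H$ is holomorphic in a neighbourhood of $\overline{\mathbb C_+}$, in particular at $i$. Running the computation forward then verifies the differential relation rigorously.

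Next I identify $h$ with $(I-W_{k_R})\mathcal C_\infty^*g$. The adjoint of \eqref{c-infty} is $u(x):=(\mathcal C_\infty^*g)(x)=\int_x^\infty g(y)\,y^{-1}\,dy$. Since $k_R(x-y)=e^{x-y}\chi_{\{y>x\}}$, we have $(W_{k_R}u)(x)=\int_x^\infty e^{x-y}u(y)\,dy$. Integrating by parts, using $u'(y)=-g(y)/y$ and $u$ bounded, gives
\[
(W_{k_R}u)(x)=u(x)-\int_x^\infty e^{x-y}\frac{g(y)}{y}\,dy .
\]
Hence $(I-W_{k_R})u=h$.

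Since $\mathcal U=\mathcal F U$, we have $\mathcal U\mathcal C\mathcal U^{-1}g=\mathcal F H=h$ on the dense set $C_c^\infty(\mathbb R_+)$. Both sides of \eqref{fenjie} are bounded, because $\mathcal C_\infty^*$ is bounded by Brown--Halmos--Shields and $k_R\in L^1\subset\mathcal{PM}$. This proves \eqref{fenjie}.

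The main obstacle is the bookkeeping of conventions. I need to confirm that with the paper's $\mathcal F$ (kernel $e^{-ixt}$) the space $H^2(\mathbb C_+)$ really corresponds to $e^{i\eta t}$ with $t>0$. I also need to track the sign and boundary terms in the correspondence $\zeta\leftrightarrow i\,\partial_t$. A sign error there would produce the non-$L^2$ branch $e^{t}$ or the kernel $e^{y}\chi_{\mathbb R_+}$ in place of $k_R$. I would double-check the result by testing $g=\chi_{(a,b)}$-type examples against $\int_i^\zeta F$ directly.
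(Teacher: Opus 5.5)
Your proof is correct. It has the same skeleton as the paper's: test on $C_c^\infty(\mathbb R_+)$, apply Lemma~\ref{le:upperhalfplane} on the Fourier side, identify $\mathcal U\mathcal C\mathcal U^{-1}g$ with $h(t)=\int_t^\infty e^{t-s}g(s)s^{-1}\,ds$, match $h$ with $(I-W_{k_R})\mathcal C_\infty^*g$, and extend by boundedness.

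The difference lies in how $h$ is found.

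\begin{itemize}
\item The paper computes $\frac{1}{\zeta-i}\int_i^\zeta F$ directly. It integrates $e^{i\eta x}$ in $\eta$, rewrites $\frac{e^{i\zeta x}-e^{-x}}{ix(\zeta-i)}=\frac1x\int_0^x e^{i\zeta t}e^{-(x-t)}\,dt$, and swaps the integrals. The Fourier-side function then falls out with no guessing and no uniqueness step, and removability at $\zeta=i$ is visible from the final formula.
\item You convert $\frac{d}{d\zeta}\bigl[(\zeta-i)\widetilde H\bigr]=F$ into the ODE $h'-h=-g/t$ and keep the $L^2$ branch. You then certify the guess by uniqueness of the primitive of $F$ vanishing at $i$ on the connected set $\mathbb C_+$.
\end{itemize}

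Your route needs the bookkeeping you flagged: the boundary term in $\zeta\leftrightarrow i\partial_t$, regularity of $th$ at $0$, and holomorphy of $\widetilde H$ at $i$. All of it holds. For $g\in C_c^\infty(\mathbb R_+)$, $h$ is smooth on $[0,\infty)$ and vanishes beyond $\operatorname{supp} g$, so $\widetilde H$ is in fact entire. Your sign conventions for $\mathcal F$ and the Paley--Wiener representation are also right.

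In return, your route explains where the paper's formula comes from. The kernel $e^{t-s}\chi_{\{s>t\}}$ with the factor $1/s$, and hence the anti-causal kernel $k_R$ rather than $e^{y}\chi_{\mathbb R_+}$, are forced by discarding the non-$L^2$ solution $ce^{t}$. Your second step, integrating by parts with $u'(y)=-g(y)/y$, is the paper's Fubini computation written differently.
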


\begin{proof}
Suppose that $u\in C_c^\infty(\mathbb R_+).$
Set \(F=\mathcal F^{-1}u\in H^{2}(\mathbb C_+)\). Thus,
\begin{align*}
 F(\eta)=\frac{1}{\sqrt{2\pi}}
 \int_0^\infty e^{i\eta x}u(x)\,dx,
 \qquad \eta\in \mathbb C_+.
\end{align*}
By Lemma~\ref{le:upperhalfplane}, we have
\begin{align*}
(U\mathcal C U^{-1}F)(\zeta)
=\frac{1}{\zeta-i}\int_i^\zeta F(\eta)\,d\eta.
\end{align*}
For \(\zeta\in\mathbb C_+\setminus\{i\}\), Fubini's theorem gives
\begin{equation*}
\begin{aligned} 
\frac{1}{\zeta-i}\int_i^\zeta F(\eta)d\eta 
&=\frac{1}{\sqrt{2\pi}}\int_0^\infty\frac{u(x)}{\zeta-i}\left(\int_i^{\zeta}e^{i\eta x}d\eta\right)dx\\
&=\frac{1}{\sqrt{2\pi}}\int_0^\infty u(x)\frac{e^{i\zeta x}-e^{-x}} {ix(\zeta-i)}\,dx\\ 
&=\frac{1}{\sqrt{2\pi}}\int_0^\infty \frac{u(x)}{x} \left( \int_0^x e^{i\zeta t}e^{-(x-t)}\,dt \right)dx\\ 
&=\frac{1}{\sqrt{2\pi}} \int_0^\infty\int_t^\infty e^{i\zeta t}e^{-(x-t)} \frac{u(x)}{x}\,dx\,dt\\ 
&=\frac{1}{\sqrt{2\pi}}\int_0^\infty \left( \int_t^\infty e^{-(x-t)} \frac{u(x)}{x}\,dx \right)e^{i\zeta t} dt. 
\end{aligned}
\end{equation*}
The apparent singularity at \(\zeta=i\) is removable. Since
\(\mathcal U=\mathcal F U\), the left-hand side of the preceding
identity is $(U\mathcal C U^{-1}\mathcal F^{-1}u)(\zeta)$.
It follows that
\begin{align*}
(\mathcal U \mathcal C \mathcal U^{-1}u)(t)
=\int_t^\infty e^{-(x-t)}\frac{u(x)}{x}\,dx.
\end{align*}

It was shown in \cite{BrownHalmosShields1965} that the adjoint $\mathcal C_\infty^*$ is given by
\[
 (\mathcal C_\infty^*u)(s)=\int_s^\infty\frac{u(x)}{x}\,dx.
\]
For \(v\in L^{2}(\mathbb R_+)\), the definition of $W_{k_{R}}$ gives
\begin{align*}
 (W_{k_{R}}v)(t)
 =\int_0^\infty k_{R}(t-x)v(x)\,dx
 =\int_t^\infty e^{-(x-t)}v(x)\,dx.
\end{align*}
Using Fubini's theorem once more, we obtain
\begin{equation*} 
\begin{aligned}
(W_{k_R}\mathcal C_\infty^*u)(t)
&=\int_t^\infty e^{-(s-t)}(\mathcal C_\infty^*u)(s)\,ds\\ 
&=\int_t^\infty e^{-(s-t)} \left(\int_s^\infty\frac{u(x)}{x}\,dx\right)ds\\ 
&=\int_t^\infty \left(\int_t^x e^{-(s-t)}\,ds\right) \frac{u(x)}{x}\,dx\\
&=\int_t^\infty \bigl(1-e^{-(x-t)}\bigr)\frac{u(x)}{x}\,dx.
\end{aligned}
\end{equation*}
Consequently,
\begin{align*}
\bigl((I-W_{k_R})\mathcal C_\infty^*u\bigr)(t)
=(\mathcal U \mathcal C \mathcal U^{-1}u)(t).
\end{align*}
The operator $W_{k_R}$ is bounded because $k_R\in L^1(\mathbb R)$,
and $\mathcal C_\infty^*$ is bounded by
\cite{BrownHalmosShields1965}.
Since \(C_c^\infty(\mathbb R_+)\) is dense in
\(L^2(\mathbb R_+)\), and both sides define bounded operators,
the identity extends to all of \(L^2(\mathbb R_+)\). Thus,
\begin{align*}
\mathcal U \mathcal C \mathcal U^{-1}=(I-W_{k_R})\mathcal C^*_\infty.
\end{align*}
\end{proof}
\section{Two Multiplication Operators}
Recall the unitary Mellin transform $\mathcal M$ (see \cite[p.166]{Nikolski2002} or \cite[p.748]{Yafaev2017}) defined by
\begin{align}\label{mellin}
 \mathcal M:L^2(\mathbb R_+)\longrightarrow L^2(\R),
 \qquad
 (\mathcal M u)(s)=\frac{1}{\sqrt{2\pi}}
 \int_0^\infty x^{-1/2-is}u(x)\,dx.
\end{align}
\begin{lemma}\label{lem:mellin-hilbert}
Let $u\in C_c^\infty(\mathbb R_+)$ and $s\in\mathbb R$. Then
\begin{align*}
&\int_0^\infty t^{-1/2-is}
\left(
\operatorname{p.v.}\int_0^\infty
\frac{u(y)}{y-t}\,dy
\right)dt
=
i\pi\sqrt{2\pi}
\tanh(\pi s)(\mathcal Mu)(s).
\end{align*}
\end{lemma}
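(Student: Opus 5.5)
The plan is to recognise the truncated Hilbert transform on $\mathbb R_+$ as a Mellin convolution, so that $t^{-1/2-is}$ is an eigenfunction and the Mellin transform diagonalises it. Concretely, I will swap the order of integration and substitute $t=yr$ in the inner integral. Formally this gives
\begin{align*}
\operatorname{p.v.}\int_0^\infty \frac{t^{-1/2-is}}{y-t}\,dt
= y^{-1/2-is}\,\operatorname{p.v.}\int_0^\infty \frac{r^{-1/2-is}}{1-r}\,dr ,
\end{align*}
and then
\begin{align*}
\int_0^\infty u(y)\,y^{-1/2-is}\,dy=\sqrt{2\pi}\,(\mathcal Mu)(s)
\end{align*}
by \eqref{mellin}. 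It therefore suffices to show that the constant $K(s)=\operatorname{p.v.}\int_0^\infty r^{-1/2-is}(1-r)^{-1}\,dr$ equals $i\pi\tanh(\pi s)$.

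\emph{Computing $K(s)$.} I substitute $r=e^{x}$. Using $e^{x/2}/(1-e^{x})=-1/(2\sinh(x/2))$, this gives
\begin{align*}
K(s)=-\operatorname{p.v.}\int_{\R}\frac{e^{-isx}}{2\sinh(x/2)}\,dx .
\end{align*}
The integrand $1/\sinh(x/2)$ is odd, so the cosine part cancels and
\begin{align*}
K(s)=i\int_{\R}\frac{\sin(sx)}{2\sinh(x/2)}\,dx ,
\end{align*}
which is absolutely convergent. The classical formula $\int_0^\infty \sin(sx)/\sinh(x/2)\,dx=\pi\tanh(\pi s)$ then gives $K(s)=i\pi\tanh(\pi s)$. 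If a self-contained proof is wanted, this formula can be obtained by integrating $e^{-isz}/\sinh(z/2)$ around the rectangle with vertices $\pm R$, $\pm R+2\pi i$, indented at the poles $0$ and $2\pi i$. Equivalently, it is Euler's $\operatorname{p.v.}\int_0^\infty r^{a-1}/(1-r)\,dr=\pi\cot(\pi a)$ at $a=\tfrac12-is$, since $\cot(\tfrac{\pi}{2}-i\pi s)=i\tanh(\pi s)$.

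\emph{Justifying the interchange.} I expect this to be the main obstacle, since the principal value and the outer $t$-integral cannot be swapped blindly. First, the outer integral converges absolutely. For $u\in C_c^\infty(\mathbb R_+)$ with support in $[a,b]\subset\mathbb R_+$, the function $Hu(t)=\operatorname{p.v.}\int u(y)/(y-t)\,dy$ is bounded, and $Hu(t)=O(1/t)$ as $t\to\infty$. Hence $t^{-1/2}Hu(t)$ is integrable on $(0,\infty)$.

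Second, I replace the additive truncation $|y-t|>\varepsilon$ by the multiplicative one $|\log(y/t)|>\delta$. Because $u$ is smooth, both truncations of $Hu(t)$ converge to $Hu(t)$. Their difference is an integral of $(u(y)-u(t))/(y-t)$ over a shrinking set, and for $t$ in a neighbourhood of $[a,b]$ it is bounded by $C\delta$ uniformly in $t$; away from $[a,b]$ the two truncations agree for small $\delta$. Dominated convergence therefore lets me use the multiplicative truncation inside the $t$-integral.

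On the truncated region $\{|\log(y/t)|>\delta\}$ the integrand $t^{-1/2}u(y)/(y-t)$ is absolutely integrable. Indeed, with $y\in[a,b]$ we have $|y-t|\ge c_\delta\,y$, and the $t$-integral of $t^{-1/2}/|y-t|$ converges at $0$ and at $\infty$. So Fubini applies. After the substitution $t=yr$, the inner integral becomes $y^{-1/2-is}K_\delta(s)$, where $K_\delta(s)$ is the integral over $|\log r|>\delta$. This region corresponds to the symmetric truncation $|x|>\delta$ in the variable $x=\log r$, so $K_\delta(s)\to K(s)$ as $\delta\to0$. The sine integrand is bounded near $0$, so the convergence is bounded in $\delta$, and letting $\delta\to0$ yields the stated identity.
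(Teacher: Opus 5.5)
Your argument is correct, but it takes a different route from the paper. The paper computes nothing at this step. It quotes Theorem~5.5 of Bl{\aa}sten, P\"aiv\"arinta and Sadique, namely $\mathbf M(\mathcal Hu)(z)=-\cot(\pi z)\,\mathbf Mu(z)$ for $0<\operatorname{Re}z<1$, and then only converts normalizations via $z=\tfrac12-is$ and $\cot(\pi(\tfrac12-is))=i\tanh(\pi s)$.

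You instead reprove that multiplier formula from scratch. Because the kernel $1/(y-t)$ is homogeneous of degree $-1$, the substitution $t=yr$ shows that the Mellin transform diagonalizes the operator. The multiplier is $K(s)=\operatorname{p.v.}\int_0^\infty r^{-1/2-is}(1-r)^{-1}\,dr=\pi\cot(\pi(\tfrac12-is))$, which you evaluate through the $\sin/\sinh$ integral. You handle the interchange of the principal value with the $t$-integral by switching to the dilation-adapted truncation $|\log(y/t)|>\delta$. That truncation makes Fubini legitimate and makes the inner integral exactly $y^{-1/2-is}K_\delta(s)$. Your constants and signs all check out.

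One small correction: the difference between $Hu(t)$ and the multiplicatively truncated integral is not only an integral of $(u(y)-u(t))/(y-t)$ over $(te^{-\delta},te^{\delta})$. That interval is not symmetric about $t$, so there is also the term $u(t)\,\operatorname{p.v.}\int_{te^{-\delta}}^{te^{\delta}}\frac{dy}{y-t}=\delta\,u(t)$. This term is $O(\delta)$ uniformly in $t$, so your $C\delta$ bound and the dominated convergence step still stand.

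As for what each approach buys: the paper's proof is two lines, but it rests entirely on an external theorem whose conventions (the sign of the Hilbert transform and the Mellin normalization) the reader must match by hand. Yours is self-contained and makes the constant $i\pi\tanh(\pi s)$ transparent. The same dilation argument would also give the multiplier for any kernel homogeneous of degree $-1$. The price is the truncation bookkeeping.
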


\begin{proof}
To distinguish the Mellin transform used in
\cite[(9)]{BlastenPaivarintaSadique2023} from the unitary Mellin transform \eqref{mellin}, write
\[
(\mathbf Mf)(z)
=
\int_0^\infty t^{z-1}f(t)\,dt.
\]
The two Mellin transforms are related by
\[
(\mathcal Mf)(s)
=
\frac{1}{\sqrt{2\pi}}
(\mathbf Mf)\left(\frac12-is\right).
\]
Set
$
z=\frac12-is.
$
Following \cite[(3)]{BlastenPaivarintaSadique2023}, define the one-sided
Hilbert transform by
\[
(\mathcal Hf)(t)
=
\operatorname{p.v.}\frac{1}{\pi}
\int_0^\infty
\frac{f(y)}{t-y}\,dy.
\]
Since $u\in C_c^\infty(\mathbb R_+)$ and
$0<\operatorname{Re}z<1$, Theorem~5.5 of
\cite{BlastenPaivarintaSadique2023} gives
\[
\mathbf M(\mathcal Hu)(z)
=
-\cot(\pi z)\mathbf Mu(z).
\]
Therefore,
\begin{align*}
&\int_0^\infty t^{z-1}
\left(
\operatorname{p.v.}\int_0^\infty
\frac{u(y)}{y-t}\,dy
\right)dt
=
-\pi\mathbf M(\mathcal Hu)(z)
=
\pi\cot(\pi z)\mathbf Mu(z).
\end{align*}
Note that
$
\cot\left(\pi\left(\frac12-is\right)\right)
=
i\tanh(\pi s)
$
and
$
\mathbf Mu\left(\frac12-is\right)
=
\sqrt{2\pi}(\mathcal Mu)(s).
$
Hence,
\begin{align*}
\pi\cot(\pi z)\mathbf Mu(z)
=
i\pi\sqrt{2\pi}
\tanh(\pi s)(\mathcal Mu)(s).
\end{align*}
\end{proof}
\begin{proposition}\label{prop:eh}
Let $k_{E}=\frac{1}{\sqrt{2\pi}}\mathcal F\chi_{\mathbb R_+}$. Then
\begin{equation}\label{formula:e}
 \mathcal M W_{k_E} \mathcal M^{-1}=M_e,
 \qquad
 e(s)=\frac{1-\tanh(\pi s)}{2},
\end{equation}
and
\begin{equation}\label{eq:Hstar-Mellin}
 \mathcal M \mathcal C^*_\infty\mathcal M^{-1}=M_h,
 \qquad
 h(s)=\frac{1}{\frac12-is}.
\end{equation}
Here \(M_e\) and \(M_h\) denote the multiplication operators on
\(L^2(\R)\).
\end{proposition}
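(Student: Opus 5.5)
The plan is to verify both identities on the dense subspace $C_c^\infty(\mathbb R_+)$ and then extend by boundedness. This works because $\mathcal M$ is unitary, $W_{k_E}$ and $\mathcal C_\infty^*$ are bounded, and $M_e$, $M_h$ are bounded multiplication operators ($|e|\le 1$, $|h|\le 2$). So for $u\in C_c^\infty(\mathbb R_+)$ I will show $\mathcal M W_{k_E}u=e\,\mathcal Mu$ and $\mathcal M\mathcal C_\infty^*u=h\,\mathcal Mu$ pointwise in $s$.

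\textbf{The formula for $\mathcal C_\infty^*$ (easier).} For $u\in C_c^\infty(\mathbb R_+)$ I use $(\mathcal C_\infty^*u)(t)=\int_t^\infty u(x)x^{-1}\,dx$. I then interchange the order of integration; Fubini applies since $u$ is supported in some $[a,b]\subset\mathbb R_+$ and $|t^{-1/2-is}|=t^{-1/2}$ is integrable near $0$. This gives
\[
\int_0^\infty t^{-1/2-is}\int_t^\infty \frac{u(x)}{x}\,dx\,dt
=\int_0^\infty \frac{u(x)}{x}\int_0^x t^{-1/2-is}\,dt\,dx
=\frac{1}{\tfrac12-is}\int_0^\infty x^{-1/2-is}u(x)\,dx .
\]
Dividing by $\sqrt{2\pi}$ yields \eqref{eq:Hstar-Mellin}.

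\textbf{The formula for $W_{k_E}$.} First I identify the kernel $k_E$ as a tempered distribution. Using the classical formula for the Fourier transform of the Heaviside function,
\[
\mathcal F\chi_{\mathbb R_+}=\frac{1}{\sqrt{2\pi}}\Bigl(\pi\delta-i\,\mathrm{p.v.}\frac1t\Bigr),
\]
one gets $k_E=\tfrac12\delta-\tfrac{i}{2\pi}\,\mathrm{p.v.}\tfrac1t$. In particular $k_E\in\mathcal{PM}(\mathbb R)$, with symbol $\sqrt{2\pi}\mathcal F^{-1}k_E=\chi_{\mathbb R_+}$. Next I compute the convolution with $u\in C_c^\infty(\mathbb R_+)$ from the definition $\langle k_E,\tau_x u_*\rangle$, which gives
\[
(W_{k_E}u)(x)=\tfrac12u(x)+\tfrac{i}{2\pi}\,\mathrm{p.v.}\int_0^\infty\frac{u(y)}{y-x}\,dy,\qquad x>0.
\]
Now I apply the Mellin transform and invoke Lemma~\ref{lem:mellin-hilbert}. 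The principal value term contributes $\frac{1}{\sqrt{2\pi}}\cdot\frac{i}{2\pi}\cdot i\pi\sqrt{2\pi}\tanh(\pi s)(\mathcal Mu)(s)=-\tfrac12\tanh(\pi s)(\mathcal Mu)(s)$. Adding $\tfrac12(\mathcal Mu)(s)$ gives $e(s)(\mathcal Mu)(s)$, which is \eqref{formula:e}.

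\textbf{Main obstacle.} The step needing the most care is the distributional one: getting the constants and signs right in $\mathcal F\chi_{\mathbb R_+}$ under the paper's normalization of $\mathcal F$, and justifying that the distributional convolution reduces to the principal value integral above. A sign error here would turn $1-\tanh$ into $1+\tanh$. To guard against this, I will cross-check with the symbol: $W_{k_E}$ should be the projection $P^+\mathcal F^{-1}\ldots$ with symbol $\chi_{\mathbb R_+}$. I will also check that $e(s)\to1$ as $s\to-\infty$ and $e(s)\to 0$ as $s\to+\infty$, consistent with the sign convention of Lemma~\ref{lem:mellin-hilbert}. Integrability of the Mellin integral of the principal value term near $0$ and $\infty$ (the Hilbert transform of $u$ is $O(1/t)$ at infinity) is already built into Lemma~\ref{lem:mellin-hilbert}.
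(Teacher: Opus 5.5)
Your proposal is correct and follows essentially the same route as the paper. Both use the Heaviside Fourier formula to get $k_E=\frac12\delta_0-\frac{i}{2\pi}\,\mathrm{p.v.}\frac1t$, feed the resulting principal-value expression for $W_{k_E}u$ into Lemma~\ref{lem:mellin-hilbert}, use the same Fubini computation for $\mathcal C_\infty^*$, and extend by density. One small slip in your sanity check: the projection is the full convolution operator $u\mapsto k_E*u$ on $L^2(\mathbb R)$, not its compression $W_{k_E}$, whose Mellin multiplier $e$ takes values strictly between $0$ and $1$.
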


\begin{proof}
After adjusting the Fourier transform convention in
\cite[p.~172]{GelfandShilov},
we obtain, in $\mathcal S'(\mathbb R)$,
\[
\mathcal F\bigl(\chi_{\mathbb R_+}\bigr)
=
\sqrt{\frac{\pi}{2}}\,\delta_0
-
\frac{i}{\sqrt{2\pi}}
\operatorname{p.v.}\frac{1}{t}.
\]
Here $\delta_0$ denotes the Dirac delta distribution at the origin,
defined by
\[
\langle\delta_0,\varphi\rangle_{\mathcal S',\mathcal S}=\varphi(0),
\qquad \varphi\in\mathcal S(\mathbb R).
\]

Consequently, 
\[
k_E
=
\frac12\delta_0
-
\frac{i}{2\pi}
\operatorname{p.v.}\frac1t
\]
in $\mathcal S'(\mathbb R)$. 
Let \(u\in C_c^\infty(\mathbb R_+)\). 
For \(t>0\), we have
\begin{equation*}
\begin{aligned} 
(W_{k_{E}}u)(t) 
&=\bigl(P^+(k_E*u)\bigr)(t)\\
&=(k_E*u)(t)\\ 
&=\frac12(\delta_0*u)(t)-\frac{i}{2\pi}
\left(\operatorname{p.v.}\frac1x*u\right)(t)\\ 
&=\frac12u(t)+\frac{i}{2\pi}
\operatorname{p.v.}\int_0^\infty\frac{u(y)}{y-t}\,dy. 
\end{aligned}
\end{equation*}
Consequently,
\begin{equation*}
\begin{aligned}
(\mathcal M W_{k_{E}}u)(s)
&=\frac{1}{\sqrt{2\pi}}\int_0^\infty
t^{-1/2-is}(W_{k_{E}}u)(t)\,dt\\
&=\frac{1}{2}(\mathcal Mu)(s)
+\frac{1}{\sqrt{2\pi}}\frac{i}{2\pi}
\int_0^\infty t^{-1/2-is}
\left(
\operatorname{p.v.}\int_0^\infty
\frac{u(y)}{y-t}\,dy
\right)dt.
\end{aligned}
\end{equation*}
By Lemma \ref{lem:mellin-hilbert}, we have
\[
\begin{aligned}
(\mathcal M W_{k_E}u)(s)
&=\frac12(\mathcal Mu)(s)
-\frac12\tanh(\pi s)(\mathcal Mu)(s)\\
&=\frac12\bigl(1-\tanh(\pi s)\bigr)(\mathcal Mu)(s)\\
&=e(s)(\mathcal Mu)(s).
\end{aligned}
\]
Thus \(\mathcal M W_{k_E}u=M_e\mathcal Mu\) for every
\(u\in C_c^\infty(\mathbb R_+)\). Since
\(C_c^\infty(\mathbb R_+)\) is dense in \(L^2(\mathbb R_+)\),
\(\mathcal M\) is unitary, and the operators \(W_{k_E}\) and \(M_e\)
are bounded, the identity extends to all of \(L^2(\mathbb R_+)\).
Therefore,
\[
\mathcal M W_{k_E}\mathcal M^{-1}=M_e.
\]
Since $u\in C_c^\infty(\mathbb R_+),$
\begin{equation*}
\begin{aligned} 
&\int_0^\infty \int_t^\infty t^{-1/2}\frac{|u(x)|}{x}dxdt\\ 
&\quad= \int_0^\infty \frac{|u(x)|}{x} \left(\int_0^x t^{-1/2}\,dt\right)dx\\
&\quad= 2\int_0^\infty |u(x)|x^{-1/2}\,dx<\infty. 
\end{aligned}
\end{equation*}
Fubini's theorem gives
\begin{equation*}
\begin{aligned} 
(\mathcal M \mathcal C^*_\infty u)(s) 
&=\frac1{\sqrt{2\pi}} \int_0^\infty t^{-1/2-is} \left( \int_t^\infty\frac{u(x)}x\,dx \right)dt\\ 
&=\frac1{\sqrt{2\pi}} \int_0^\infty \frac{u(x)}x \left( \int_0^x t^{-1/2-is}\,dt \right)dx\\
&=\frac1{\frac12-is} \frac1{\sqrt{2\pi}} \int_0^\infty x^{-1/2-is}u(x)dx\\ 
&=\frac1{\frac12-is} (\mathcal M u)(s). 
\end{aligned}
\end{equation*}
Since $C_c^\infty(\mathbb R_+)$ is dense in $L^2(\mathbb R_+)$ and
both $\mathcal C_\infty^*$ and $M_h$ are bounded, this identity extends
to all of $L^2(\mathbb R_+)$, which proves \eqref{eq:Hstar-Mellin}.
\end{proof}
\begin{remark}
Gallardo-Guti{\'e}rrez, Partington, and Ross proved in
\cite[Proposition~3.14]{GallardoGutierrezPartingtonRoss2025Hardy} that
$I-\mathcal C^{*}_{\infty}$ is unitarily equivalent to the multiplication
operator $M_{\varphi}$, where
$\varphi(s)=\frac{s-\frac12}{s+\frac12}$.
In fact, their result implies \eqref{eq:Hstar-Mellin}. However, since the
Mellin transform used in \cite{GallardoGutierrezPartingtonRoss2025Hardy}
is defined with a slightly different normalization, we provide a direct
proof of \eqref{eq:Hstar-Mellin}. Their result also motivated us to
establish \eqref{prop:eh}.
\end{remark}

\section{Proof of the Main Theorem}
\begin{proof}[Proof of Theorem~\ref{thm:main}]
In view of \eqref{fenjie}, 
we need to compute separately 
the representations of $\mathcal C^*_\infty$ and $W_{k_R}$ 
on $H^2(\mathbb D)$ under the unitary operator $\mathcal U$.

Since 
$\lim_{s\to 0^{+}}g(s)=0$ and $
\lim_{s\to 1^{-}}g(s)=0,$ it follows that $g$ is continuous on 
$[0,1]$. By Proposition~\ref{prop:eh}, the operator
$W_{k_E}$ is unitarily equivalent to $M_e$ and therefore is  normal.
Since the essential range of $e$ is $[0,1]$, we have
$\sigma(W_{k_E})=[0,1]$. Direct computation gives $g(e(s))
=\frac{1}{2(\frac12-is)}=\frac{1}{2}h(s).$

The continuous functional calculus and
Proposition~\ref{prop:eh} now give
\begin{align*}
2\mathcal M g(W_{k_E})\mathcal M^{-1}
&=g(M_e)=M_{g\circ e}=M_h
=\mathcal M\mathcal C_\infty^*\mathcal M^{-1}.
\end{align*}
Hence, $\mathcal C^*_\infty=2g(W_{k_E})$.

Since $k_E=\frac{1}{\sqrt{2\pi}}\mathcal F\chi_{\mathbb R_+}$ and
$\omega^{-1}(e^{i\theta})=-\cot(\theta/2)$, we have
$\chi_{\mathbb R_+}\circ\omega^{-1}=\chi_{\mathbb T_-}$. Proposition~\ref{pr:wiener} implies that
$
W_{k_E}
=
\mathcal U
T_{\mathord{\scalebox{1}{$\chi$}}_{\mathbb T_-}}
\mathcal U^{-1}.$
By unitary invariance of the continuous functional calculus,
\begin{align}\label{cinfty}
  \mathcal C_\infty^*
=
2\mathcal U
g\left(
T_{\mathord{\scalebox{1}{$\chi$}}_{\mathbb T_-}}
\right)
\mathcal U^{-1}.
\end{align}

By Proposition \ref{pr:wiener}, we have
\begin{equation*}
 \Phi_R(x)
 =(\sqrt{2\pi}\mathcal{F}^{-1}k_R)(x)
 =\frac{1}{1+i x}, \quad x\in \mathbb{R}
\end{equation*}
and 
\begin{align*}
(\Phi_R\circ \omega^{-1})(\xi)=\frac{1}{2}(1-\frac{1}{\xi})
\qquad \xi\in \mathbb T\setminus\{1\}.
\end{align*}
Since $1/\xi=\overline\xi$ on $\mathbb T$, Proposition~\ref{pr:wiener}
implies that
$W_{k_R}=\frac{1}{2}\mathcal U(I-S^*)\mathcal U^{-1}$, and hence
\begin{align}\label{wkr}
I-W_{k_R}=\frac{1}{2}\mathcal U(I+S^*)\mathcal U^{-1}.
\end{align}

Combining \eqref{cinfty}, \eqref{wkr} and \eqref{fenjie}, we obtain
$\mathcal U\mathcal C\mathcal U^{-1}
=\mathcal U(I+S^*)
g(T_{\mathord{\scalebox{1}{$\chi$}}_{\mathbb T_-}})
\mathcal U^{-1}.$
Hence,
\begin{align}\label{cg}
\mathcal C=(I+S^*)
g(T_{\mathord{\scalebox{1}{$\chi$}}_{\mathbb T_-}}).
\end{align}
Since $S^*$ and $T_{\mathord{\scalebox{1}{$\chi$}}_{\mathbb T_-}}$ belong to $\mathbf T$, 
the continuous functional calculus gives
$g(T_{\mathord{\scalebox{1}{$\chi$}}_{\mathbb T_-}})\in\mathbf T.$
\eqref{cg} implies that $\mathcal C\in\mathbf T$.

Barr\'{\i}a and Halmos \cite{BarriaHalmos1982} introduced the class of asymptotic Toeplitz operators, which we denote by
\[
\mathbf T^\infty
=
\left\{
T\in\mathcal B(H^2):
\text{there exists }\varphi\in L^\infty(\mathbb T)
\text{ such that }
\operatorname*{s-lim}_{n\to\infty}S^{*n}TS^n=T_\varphi
\right\}.
\]
The symbol map is defined by
\[
\sigma:\mathbf T^\infty\longrightarrow L^\infty(\mathbb T),
\qquad
\sigma(T)=\varphi.
\]
They further showed that $\sigma(\mathcal C)=0$ \cite[Example~10]{BarriaHalmos1982}
 and proved that
$\ker\!\left(\sigma|_{\mathbf T}\right)=\mathbf Q$
\cite[Theorem~7]{BarriaHalmos1982}.
Since $\mathcal C\in\mathbf T$, it follows that $\mathcal C\in\mathbf Q$.
\end{proof}
\begin{remark}
The identity $\sigma(\mathcal C)=0$ can also be derived from the expression for $g$.
\[
  \sigma\!\left(g(T_{\chi_{\mathbb T_-}})\right)
  =g(\chi_{\mathbb T_-})=0,
\]
because $g(0)=g(1)=0$.  Thus
$\sigma(\mathcal C)=\sigma((1+S^*))\sigma\!\left(g(T_{\chi_{\mathbb T_-}})\right)=0.$
\end{remark}

\end{document}